\newtheorem{theorem}{Theorem}[section]
\newtheorem{lemma}[theorem]{Lemma}
\newtheorem{proposition}[theorem]{Proposition}
\newtheorem{definition}[theorem]{Definition}
\date{}
\title{Rigidity of the $\operatorname{Sine}_{\beta}$ process}
\author{Reda Chhaibi, Joseph Najnudel}
\begin{document}
\maketitle

\begin{abstract}
We show that the $\operatorname{Sine}_{\beta}$ point process, defined as the scaling limit of the Circular Beta Ensemble when the dimension goes to infinity, and generalizing the determinantal sine-kernel process, is rigid in the sense of Ghosh and Peres: the number of points in a given bounded Borel set $B$ is almost surely equal to a measurable function of the position of the points outside $B$.
\end{abstract}

\section{Introduction}
If $E$ is a complete separable metric space, we can define a point process on $E$ as a random purely atomic Radon measure $X$ on $E$, which can be viewed as a random locally finite collection of points in $M$, with possible repetitions, the order of the points being irrelevant. Classical results on the theory of point processes are for example given in the book by Daley and Vere-Jones \cite{DVJ03}. If $X$ is a point process on $E$, and if $B$ is a Borel set of $E$, we can consider the random variable $X(B)$, corresponding to the number of points lying in $B$. Of course, this random variable does not tell about  the exact position of the $X(B)$ points of $X$ inside $B$. This information is given by the $\sigma$-algebra $\Sigma_B$, generated by all the variables $X(A)$ for Borel sets $A$ included in $B$. 

It has been observed that some point processes satisfy the unusual property that the number of points in any Borel set is uniquely determined by the points in the complement of this set. For example, deterministic point processes, or periodic  point processes for $E = \mathbb{R}$, obviously satisfy this property, whereas Poisson point processes with non-zero intensity do not. 
The following definition has been introduced by Ghosh \cite{G15} (see also Ghosh and Peres \cite{GP17}): 
\begin{definition} \label{rigidity}
A point process $X$ on a complete separable metric space $E$ is {\it rigid} if and only if for all bounded Borel subsets $B$ of $E$, the number of points $X(B)$ in $B$ is measurable with respect to the $\sigma$-algebra $\Sigma_{E \backslash B}$.
\end{definition} 
This notion has been previously studied, with a different name, by Holroyd and  Soo \cite{HS13}. 
In \cite{G15}, Ghosh shows that the {\it determinantal sine-kernel process} is rigid: it is the first non-trivial example of such process. The determinantal sine-kernel is a real-valued point process ($E = \mathbb{R}$), defined as follows: it has no multiple points and its $m$-point correlation function  at $x_1, \dots, x_m$  is equal to $\operatorname{det} ((\mathbb{S}(x_j,x_k))_{1 \leq j, k \leq m})$, where 
the {\it sine-kernel} $\mathbb{S}$ is defined by 
$$\mathbb{S} (x,y) = \frac{ \sin(\pi(x-y))}{\pi(x-y)}.$$
As proven by Dyson (see \cite{D621}, \cite{D622}, \cite{D623}), the determinantal sine-kernel process can be obtained as the limit, when $n$ goes to infinity, of the point process of the eigenangles of a Haar-distributed unitary matrix (the Circular Unitary Ensemble) multiplied by $n/2 \pi$. The sine-kernel process is also a scaling limit for many other matrix ensembles, including the Gaussian Unitary Ensemble, for which the matrix is Hermitian, invariant by unitary conjugation, and has complex Gaussian entries. It is also conjectured that the sine-kernel process is related to the distribution of the zeros of the Riemann zeta function (see Montgomery \cite{M73}, Rudnick and Sarnak \cite{RS94}). 

Many determinantal processes have been studied in the literature, in particular in relation with random matrix theory: a general presentation of these processes is for example given by Soshnikov in \cite{S00}. Besides the sine-kernel process,    other determinantal processes have been proven to be rigid: it is the case for the determinantal processes with Airy kernel and with Bessel kernel (see Bufetov \cite{B16}), and for determinantal processes associated to de Branges spaces of holomorphic functions (see Bufetov and Shirai \cite{BS17}). Some two-dimensional point processes are also proven to be rigid, including the infinite Ginibre ensemble and the set of zeros of some Gaussian analytic functions (see Ghosh and Peres \cite{GP17}). Rigidity of other two-dimensional determinantal processes has been studied in papers by Bufetov and Qiu (see \cite{BQ15}, \cite{BQ17} and \cite{BQ172}). 

The probability density of the eigenvalue distribution of the Circular Unitary Ensemble is proportional to a constant times the product of the squares of the  mutual distances between the points. If we replace the squares by $\beta$ powers ($\beta > 0$), we get the {\it Circular Beta Ensemble}, which is also the spectrum of random matrix ensembles constructed by Killip and Nenciu \cite{KN04}. Using these random matrices,  Killip and Stoiciu \cite{KS09} prove that the arguments of the Circular Beta Ensemble, multiplied by the dimension and divided by $2 \pi$, tend to a limiting point process, called $\operatorname{Sine}_{\beta}$ process, which corresponds to the determinantal sine-kernel process for $\beta = 2$.

In \cite{VV09}, V\'alko and Vir\'ag show that the  $\operatorname{Sine}_{\beta}$ process is also the scaling  limit of the Gaussian Beta Ensemble, generalizing the Gaussian Unitary Ensemble, and corresponding to a model of random tridiagonal matrices introduced  by Trotter \cite{T84}, and by Dumitriu and Edelman \cite{DE02}. A description of the $\operatorname{Sine}_{\beta}$ in terms of the hyperbolic Brownian motion is given in \cite{VV09}. In \cite{VV17}, V\'alko and Vir\'ag construct a Hermitian operator whose spectrum forms a $\operatorname{Sine}_{\beta}$ process. 

Since the determinantal sine-kernel process is rigid, it is natural to expect that it is also the case for the $\operatorname{Sine}_{\beta}$ process. In this paper, we give a proof of this result, which gives, to our knowledge, the first example of a rigid process for which no explicit formula is known for the correlation functions. 
\begin{theorem}
The $\operatorname{Sine}_{\beta}$ point  process, defined as the weak limit, when $n$ goes to infinity, of the point process of the eigenangles of the $n$-dimensional Circular Beta Ensemble, multiplied by $n/2 \pi$, is rigid in the sense of Definition \ref{rigidity}. 
\end{theorem}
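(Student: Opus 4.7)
The overall strategy is to invoke the Ghosh--Peres rigidity criterion: a point process $X$ on $\mathbb{R}$ is rigid in the sense of Definition \ref{rigidity} as soon as, for every bounded Borel set $B$, one can exhibit a sequence of bounded compactly supported test functions $\varphi_n : \mathbb{R} \to \mathbb{R}$ satisfying $\varphi_n \equiv 1$ on $B$ and $\operatorname{Var}\bigl(\int \varphi_n \, dX\bigr) \to 0$. Granted such a sequence, the linear statistic $\int \varphi_n \, dX$ converges in $L^2$ to its (constant) expectation, and the identity
$$\int \varphi_n \, dX = X(B) + \int_{\mathbb{R} \setminus B} \varphi_n \, dX$$
then exhibits $X(B)$ as the $L^2$ limit of a sequence of $\Sigma_{\mathbb{R} \setminus B}$-measurable random variables, yielding the required measurability.

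The problem thus reduces to obtaining an inequality of the form $\operatorname{Var}\bigl(\int \varphi \, d\operatorname{Sine}_\beta\bigr) \leq C(\beta)\,\mathcal{N}(\varphi)$ for a suitable Sobolev-type seminorm $\mathcal{N}$ acting on smooth compactly supported $\varphi$. Since no closed expression is available for the correlation functions of $\operatorname{Sine}_\beta$, the plan is to derive this bound by passing to the limit in the Circular Beta Ensemble provided by Killip and Stoiciu. Writing $X_n$ for the rescaled eigenangle process, the weak convergence $X_n \to \operatorname{Sine}_\beta$, combined with a uniform second-moment control on $\int \varphi \, dX_n$ to secure uniform integrability of squares, will transfer the desired variance estimate to the limit.

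The key step, and the main obstacle, is therefore to prove the variance bound for $\int \varphi \, dX_n$ with a constant independent of $n$. Two natural routes are available: the Killip--Nenciu CMV matrix model, in which the spectrum is encoded by independent Verblunsky coefficients with explicit distributions, so that linear statistics become computable via trace identities; and the log-gas point of view, which provides Dyson--Schwinger / loop equations for beta ensembles at general temperature. Either method should yield a Sobolev-type control whose strength is comparable to the determinantal case $\beta = 2$, with prefactor essentially $2/\beta$. Care will be required to keep the dependence explicit both in $\varphi$ and in $n$, and to handle the passage from points on the unit circle (finite $n$) to points on the line (the critical frequency scale is $1/n$); this is where the lack of explicit correlation structure makes the analysis genuinely harder than for the sine kernel.

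With the variance bound at hand, the construction of the sequence $\varphi_n$ proceeds along the lines of Ghosh's treatment of the determinantal sine-kernel case: take $\varphi_n$ to be a smooth plateau equal to $1$ on $B$ and decaying to $0$ over a range that grows with $n$, chosen so that $\mathcal{N}(\varphi_n) \to 0$. This is possible because $\mathcal{N}$ is a low-frequency-insensitive seminorm (morally the homogeneous $H^{1/2}$ norm), so spreading the transition region over an increasingly large interval forces the contribution of $\varphi_n'$ to $\mathcal{N}(\varphi_n)$ to vanish. Once the variance is shown to tend to $0$, the Ghosh--Peres criterion recalled in the first paragraph completes the proof.
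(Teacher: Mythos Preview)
Your overall strategy matches the paper's: reduce rigidity to the Ghosh--Peres criterion, derive a variance bound for linear statistics of $\operatorname{Sine}_\beta$ by passing to the limit from the $C\beta E$, and then construct test functions close to $1$ on $B$ with vanishing variance. The paper carries this out using a result of Jiang and Matsumoto giving $\mathbb{E}[|\operatorname{Tr}(M^k)|^2] \ll_\beta k$ for $0<k\leq n/2$, which after rescaling yields $\operatorname{Var}\bigl(\sum_{x\in E}f(x)\bigr) \ll_\beta \int_{\mathbb{R}} |\xi|\,|\hat f(\xi)|^2\,d\xi$ for Schwartz $f$ with $\operatorname{supp}\hat f\subset[-1/2,1/2]$. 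So your identification of the controlling seminorm as ``morally $\dot H^{1/2}$'' is correct.

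However, your final step contains a genuine gap. You propose a smooth plateau equal to $1$ on $B$ with transition width $L\to\infty$, claiming that spreading the transition makes the $\dot H^{1/2}$ seminorm vanish. This is false: the homogeneous $\dot H^{1/2}$ seminorm in one dimension is dilation-invariant. Concretely, if $\varphi_L(x)=\chi(x/L)$ for a fixed bump $\chi$, then $\|\varphi_L\|_{\dot H^{1/2}}=\|\chi\|_{\dot H^{1/2}}$ for every $L$; equivalently, in the double-integral form $\int\!\!\int (f(x)-f(y))^2/(x-y)^2\,dx\,dy$, the contribution of the transition region is a fixed positive constant independent of its width. The paper overcomes this with a non-obvious iterative averaging trick: setting $f_L(t)=\tfrac12\bigl(f(t)+f(t/L)\bigr)$, one gets $\|f_L\|_{\dot H^{1/2}}^2\to\tfrac12\|f\|_{\dot H^{1/2}}^2$ as $L\to\infty$ because the cross term vanishes (the two summands live at essentially disjoint frequency scales). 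Iterating drives the seminorm to zero while preserving $f(0)=1$, and a final dilation makes $f$ uniformly close to $1$ on $[-R,R]$. This multi-scale averaging, not a single wide plateau, is what makes the construction work.

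A secondary issue: you insist on compactly supported test functions, but the variance bound the paper obtains (inherited from the restriction $|k|\leq n/2$ in the trace-moment estimate) applies only to $f$ with $\operatorname{supp}\hat f\subset[-1/2,1/2]$, which by Paley--Wiener forbids compact support in $x$. Consequently the paper's test functions are Schwartz, only \emph{close} to $1$ on $[-R,R]$, and the final argument carries an extra error term $\sum_{x\in E,\,|x|\leq R}(f_p(x)-1)$ controlled separately in $L^1$ via the one-point function. Your outline would need to be adjusted in the same way.
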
 
The global strategy of the proof is similar to what has been done for the rigid process considered before: we find functions $f$ which are arbitrarily close to $1$ in an  interval $I$, and whose corresponding linear statistics have arbitrarily small variance when they are applied to the   $\operatorname{Sine}_{\beta}$ point  process $X$.
This gives an approximation of the number of points in $I$ in terms of the sum of $f$ at points of $X$ outside $I$. 

The variance of the linear statistics are estimated by using bounds on the variance of $\operatorname{Tr}(M^k)$, where $M$ is a matrix corresponding to the Circular Beta Ensembles and $k$ is not too large, and by letting the dimension of the $C \beta E$ tending to infinity. 

The details of the proof are given in the next section. We use the following notation: $A \ll_{\beta} B$ means that there exists $C(\beta) > 0$ depending only on $\beta$, such that $|A| \leq C(\beta) B$. 

\subsection*{Acknowledgements}
The authors are grateful to Sasha Bufetov and Yanqi Qiu for fruitful discussions as well as introducing them to the concept of rigidity of point processes.

\section{Proof of the main theorem} 
In all this section $\beta > 0$ is a fixed parameter. 
The main estimate on the  $C\beta E$ which is used in our proof comes from a paper by Jiang and Matsumoto 
\cite{JM15}. Corollary 2, (a) of this paper (applied to the partition with unique element $k$) gives the following: 
\begin{lemma}
Let $M$ be a unitary matrix whose spectrum corresponds to the $C \beta E$ of dimension $n$. Then, for $0 < k \leq n/2$, 
$$\mathbb{E}[ |\operatorname{Tr}(M^{k})|^2] \ll_{\beta} k.$$
\end{lemma}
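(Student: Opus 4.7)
The plan is to leverage the Jack polynomial basis, since for the circular $\beta$-ensemble the Jack polynomials $P_\lambda^{(\alpha)}$ with Jack parameter $\alpha = 2/\beta$ play exactly the role that Schur polynomials play under the Haar measure on $U(n)$ (the $\beta=2$ case, where Diaconis--Shahshahani give $\mathbb{E}[|\operatorname{Tr}(M^k)|^2]=k$ exactly for $k\le n$). First, I would identify $\operatorname{Tr}(M^k)$ with the $k$-th power-sum symmetric function $p_k$ evaluated at the eigenvalues $e^{i\theta_1},\dots,e^{i\theta_n}$, and expand
\[
p_k \;=\; \sum_{\lambda \vdash k} c_\lambda^{(\alpha)} \, P_\lambda^{(\alpha)},
\]
where the transition coefficients $c_\lambda^{(\alpha)}$ are the classical power-sum-to-Jack coefficients, known explicitly in terms of the arm/leg statistics of the Young diagram $\lambda$ (Stanley's formulas).

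Next, I would invoke the orthogonality of Jack polynomials with respect to the $C\beta E$ measure: for partitions $\lambda, \mu$ with $\ell(\lambda), \ell(\mu) \leq n$,
\[
\mathbb{E}\bigl[ P_\lambda^{(\alpha)}(M) \, \overline{P_\mu^{(\alpha)}(M)} \bigr] \;=\; \delta_{\lambda\mu}\, h_\lambda(n,\beta),
\]
where $h_\lambda(n,\beta)$ is a completely explicit product of arm/leg/co-arm/co-leg factors generalizing the Schur--unitary integrals. Since $k \leq n/2 \leq n$, every partition $\lambda \vdash k$ satisfies $\ell(\lambda) \leq k \leq n$, so no truncation occurs and the two preceding displays combine into
\[
\mathbb{E}\bigl[|\operatorname{Tr}(M^k)|^2\bigr] \;=\; \sum_{\lambda \vdash k} |c_\lambda^{(\alpha)}|^2 \, h_\lambda(n,\beta).
\]

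The remaining task, and clearly the main obstacle, is to bound this partition sum by $C(\beta)\cdot k$ uniformly in $n$ in the regime $n \geq 2k$. For $\beta=2$ only hook shapes contribute and the sum collapses to $k$ exactly; for general $\beta$ the number of contributing shapes is the full partition count $p(k)$, and one must simultaneously control the growth of the Stanley coefficients and the behaviour of the $n$-dependent factors in $h_\lambda(n,\beta)$. The constraint $k \le n/2$ presumably enters here as a safe technical threshold, ensuring that each ratio appearing in $h_\lambda(n,\beta)$ stays comparable to its leading order $n^{2|\lambda|}$ with controlled relative error (so that cancellations between numerator and denominator produce the linear-in-$k$ behaviour rather than a blow-up from small denominators). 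After this uniform estimate, the $C(\beta)$ constant emerges by absorbing all $\beta$-dependent Pochhammer factors coming from the arm/leg products, yielding the stated bound $\ll_\beta k$.
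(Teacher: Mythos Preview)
The paper does not supply its own proof of this lemma; it simply cites Corollary~2(a) of Jiang--Matsumoto, applied to the one-part partition $(k)$, which already gives an explicit closed-form expression for $\mathbb{E}[|\operatorname{Tr}(M^k)|^2]$ from which the bound $\ll_\beta k$ is read off directly. Your Jack-polynomial framework is the right one, and is essentially what underlies the Jiang--Matsumoto computation, so in spirit you are reconstructing the cited result.

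That said, the proposal contains a factual error and, separately, a genuine gap.

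The factual error: you assert that for general $\beta$ the expansion $p_k=\sum_{\lambda\vdash k} c_\lambda^{(\alpha)} P_\lambda^{(\alpha)}$ runs over all $p(k)$ partitions of $k$. It does not. For every $\alpha>0$, the coefficient $c_\lambda^{(\alpha)}$ vanishes unless $\lambda$ is a hook $(k-j,1^j)$, exactly as in the Schur case $\alpha=1$. (This is the Jack analogue of the vanishing of $\chi^\lambda$ at a long cycle for non-hook $\lambda$; see for instance Stanley's paper on Jack symmetric functions.) So the sum has exactly $k$ terms, not exponentially many, and that is precisely why the answer is linear in $k$.

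The gap: you explicitly identify the partition-sum bound as ``the main obstacle'' and then do not carry it out; the proposal stops at ``after this uniform estimate'' with no estimate performed. With the correct hook-only expansion this step is routine: for each of the $k$ hooks, both $|c_\lambda^{(\alpha)}|^2$ and the norm $h_\lambda(n,\beta)$ are explicit products of $O(k)$ arm/leg factors, and the hypothesis $k\le n/2$ keeps every $n$-dependent ratio in $h_\lambda(n,\beta)$ bounded above and below by constants depending only on $\beta$, so each summand is $O_\beta(1)$. Under your stated belief that all $p(k)$ partitions contribute, however, the estimate would require controlling $e^{\Theta(\sqrt{k})}$ terms down to a linear total, and you give no mechanism for that. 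Either way, what is written is a strategy outline rather than a proof.
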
 
From this lemma, we can prove the following: 
\begin{proposition}
Let $X_n$ be the set of points following the $C \beta E$ of dimension $n$ and let $f$ be a smooth function from the unit circle $\mathbb{U}$ to $\mathbb{C}$, whose  Fourier transform  vanishes at zero and outside the interval $[-n/2,n/2]$. Then 
$$\mathbb{E} \left[ \left| \sum_{z \in X_n} f(z) \right|^2 \right] \ll_{\beta} \sum_{k \in \mathbb{Z}} |k| \hat{f}(k),$$
uniformly on $n$. Here, the Fourier transform $\hat{f}$ is defined by the expansion: 
$$f(z) = \sum_{k \in \mathbb{Z}} \hat{f}(k) z^k.$$
\end{proposition}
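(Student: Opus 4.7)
The plan is to Fourier-expand the linear statistic as a linear combination of power sums of the eigenvalues, exploit the rotational invariance of the $C\beta E$ to kill the off-diagonal covariances, and then apply the previous lemma term by term to control the diagonal. Write $M$ for a unitary matrix whose spectrum realizes $X_n$, and let $T_k := \operatorname{Tr}(M^k) = \sum_{z \in X_n} z^k$. The Fourier expansion of $f$, together with the hypotheses $\hat f(0) = 0$ and $\operatorname{supp}(\hat f) \subseteq [-n/2, n/2]$, gives immediately
\[
\sum_{z \in X_n} f(z) \;=\; \sum_{0 < |k| \leq n/2} \hat f(k)\, T_k.
\]

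Next I would observe that the joint density of the eigenangles of the $C\beta E$, proportional to $\prod_{j < \ell} |e^{i\theta_j} - e^{i\theta_\ell}|^{\beta}$, is invariant under the global rotation $\theta_j \mapsto \theta_j + \alpha$. Under such a rotation $T_k$ is multiplied by $e^{ik\alpha}$, so $T_j \overline{T_k}$ has the same law as $e^{i(j-k)\alpha} T_j \overline{T_k}$ for every $\alpha \in \mathbb{R}$. Averaging over $\alpha$ forces $\mathbb{E}[T_j \overline{T_k}] = 0$ whenever $j \neq k$. Expanding the square of the linear statistic and using this orthogonality yields
\[
\mathbb{E}\left[\,\left| \sum_{z\in X_n} f(z) \right|^2 \,\right] \;=\; \sum_{0 < |k| \leq n/2} |\hat f(k)|^2\, \mathbb{E}\!\left[|T_k|^2\right].
\]

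Finally, since $M$ is unitary we have $T_{-k} = \overline{T_k}$, hence $\mathbb{E}|T_k|^2 = \mathbb{E}|T_{-k}|^2$, and the previous lemma applies on the entire range $0 < |k| \leq n/2$ to give $\mathbb{E}|T_k|^2 \ll_{\beta} |k|$. Summing over $k$ produces the claimed bound, and the uniformity in $n$ is automatic because the implicit constant from the lemma depends only on $\beta$.

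I do not anticipate a serious obstacle: the whole argument is essentially the observation that rotational invariance diagonalizes the covariance structure of the power sums, after which the input from Jiang--Matsumoto does all the analytic work. The only point requiring care is matching the Fourier support of $f$ precisely to the range $|k| \leq n/2$ in which the lemma is valid, which is built into the hypothesis of the proposition.
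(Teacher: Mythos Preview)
Your proposal is correct and follows essentially the same route as the paper: Fourier-expand the linear statistic, use rotational invariance of the $C\beta E$ to kill the off-diagonal covariances $\mathbb{E}[T_j\overline{T_k}]$ for $j\neq k$, and then apply the Jiang--Matsumoto lemma to each surviving diagonal term. Your explicit remark that $T_{-k}=\overline{T_k}$ so the lemma covers negative $k$ as well is a point the paper leaves implicit.
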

\begin{proof}
We have 
$$\mathbb{E} \left[ \left| \sum_{z \in X_n} f(z) \right|^2 \right]
= \mathbb{E} \left[ \sum_{z, z' \in X_n} \sum_{k \in \mathbb{Z}}  \hat{f}(k) z^k
 \sum_{k' \in \mathbb{Z}}  \overline{\hat{f}(k') (z')^{k'} } \right].$$
 Everything is integrable since $\hat{f}$ has finite support, so we can apply Fubini's theorem:
 $$ \mathbb{E} \left[ \left| \sum_{z \in X_n} f(z) \right|^2 \right]
 = \sum_{k, k' \in \mathbb{Z}}\hat{f}(k) \overline{\hat{f}(k')} 
 \mathbb{E} \left[ \sum_{z, z' \in X_n} z^k \overline{(z')^{k'}}\right].$$
 If $k \neq k'$, the last expectation is multiplied by $u^{k-k'}$ if we multiply all the points of $X_n$ by $u \in \mathbb{U}$. On the other hand, it should be conserved since the law of $X_n$ is rotationally invariant. Hence, the expectation is zero: 
 $$\mathbb{E} \left[ \left| \sum_{z \in X_n} f(z) \right|^2 \right]
  = \sum_{k \in \mathbb{Z}}|\hat{f}(k)|^2  \mathbb{E} \left[ \sum_{z, z' \in X_n} z^k \overline{(z')^{k}}\right]$$
  Now, 
  $$ \sum_{z, z' \in X_n} z^k \overline{(z')^{k}} 
  = \left| \sum_{z \in X_n} z^k \right|^2 = |\operatorname{Tr}(M^{k})|^2$$
  where $M$ is a matrix whose eigenvalues form the set $X_n$. 
  We conclude by using the previous lemma. 
\end{proof}

In order to take a limit when $n$ goes to infinity, it is useful to translate the result above in terms of the 
 renormalized arguments of the points of $X_n$. 
For a function $f$ from $\mathbb{R}$ to $\mathbb{C}$ in the Schwartz space, we introduce its Fourier transform: 
$$\hat{f} (\lambda) = \int_{-\infty}^{\infty} f(t) e^{-2i \pi \lambda t} dt.$$
which  should not be confused with the Fourier transform of a function from $\mathbb{U}$ to $\mathbb{C}$, even if the two notions of Fourier transform are denoted in the same way  in this paper. 
\begin{proposition}
Let $f$ be a function from $\mathbb{R}$ to $\mathbb{C}$ in the Schwartz space, such that 
its Fourier transform vanishes outside the interval $[-1/2,1/2]$. 
Let $X_n$ be the set of points following the $C \beta E$ of dimension $n$. 
For $n \geq 1$, let $E_n$ be the set of all possible  determinations of the arguments of the points of $X_n$, multiplied by $n/2 \pi$ (in particular $E_n$ is a $n$-periodic set of points).  Then, 
$$\mathbb{E} \left[ \left|\sum_{x \in E_n} f(x) - \int_{-\infty}^{\infty} f(t) dt \right|^2 \right] 
\ll_{\beta} \int_{\mathbb{R}} |[x]_n| |\hat{f} ([x]_n)|^2 dx,$$
where $[x]_n$ is $1/n$ times the integer part of $nx$ (the quantity just above is then a Riemann sum).
\end{proposition}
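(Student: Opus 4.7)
The plan is to transfer the bound from the previous proposition, which concerns functions on the unit circle $\mathbb{U}$, to the statement about the Schwartz function $f$ on $\mathbb{R}$ by means of Poisson summation, exploiting the periodicity of $E_n$.

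Concretely, write the $n$ eigenangles of $M$ as $\theta_1, \dots, \theta_n \in [0, 2\pi)$, so that the points of $E_n$ in $[0,n)$ are $y_j = n\theta_j/(2\pi)$ and $E_n = \{y_j + kn : 1 \leq j \leq n, k \in \mathbb{Z}\}$. Applying Poisson summation to the Schwartz function $f$, one has
$$\sum_{k \in \mathbb{Z}} f(y_j + kn) = \frac{1}{n} \sum_{m \in \mathbb{Z}} \hat{f}(m/n) e^{2i\pi m y_j / n} = \frac{1}{n} \sum_{m \in \mathbb{Z}} \hat{f}(m/n) e^{i m \theta_j}.$$
Summing over $j$ and pulling out the $m=0$ term, which yields $\hat{f}(0) = \int_{\mathbb{R}} f(t)\, dt$, one obtains
$$\sum_{x \in E_n} f(x) - \int_{\mathbb{R}} f(t)\, dt = \frac{1}{n} \sum_{m \neq 0} \hat{f}(m/n) \operatorname{Tr}(M^m) = \sum_{z \in X_n} g(z),$$
where $g$ is the function on $\mathbb{U}$ whose Fourier coefficients are $\hat{g}(m) = \hat{f}(m/n)/n$ for $m \neq 0$ and $\hat{g}(0) = 0$. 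The hypothesis that $\hat{f}$ vanishes outside $[-1/2, 1/2]$ guarantees that $\hat{g}(m) = 0$ for $|m| > n/2$, so $g$ is a trigonometric polynomial and the hypotheses of the previous proposition are satisfied.

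Applying that proposition (reading the right-hand side as $\sum_k |k| |\hat{g}(k)|^2$, which is what the lemma on $\mathbb{E}[|\operatorname{Tr}(M^k)|^2]$ actually yields) gives
$$\mathbb{E}\!\left[\left|\sum_{x \in E_n} f(x) - \int_{\mathbb{R}} f(t)\, dt\right|^2\right] \ll_{\beta} \sum_{0 < |m| \leq n/2} |m|\, \frac{|\hat{f}(m/n)|^2}{n^2}.$$
To finish, I would recognize the right-hand side as the Riemann sum appearing in the statement: since $[x]_n = m/n$ on each interval $[m/n, (m+1)/n)$ and $\hat{f}([x]_n) = 0$ outside $|m| \leq n/2$, one has
$$\int_{\mathbb{R}} |[x]_n|\, |\hat{f}([x]_n)|^2\, dx = \sum_{m \in \mathbb{Z}} \frac{1}{n} \cdot \frac{|m|}{n} \cdot |\hat{f}(m/n)|^2,$$
which matches the upper bound above exactly. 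I do not anticipate any real obstacle: everything is a direct application of Poisson summation together with the previous proposition, and the rotational/periodic structure ensures the Fourier cancellation is clean. The only point requiring a little care is the bookkeeping that $\hat{f}$ being supported in $[-1/2, 1/2]$ matches precisely the constraint $|m| \leq n/2$ needed to invoke the bound on $\mathbb{E}[|\operatorname{Tr}(M^m)|^2]$.
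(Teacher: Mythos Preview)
Your proposal is correct and follows essentially the same route as the paper: both reduce to showing that $\sum_{x\in E_n} f(x)-\int f=\sum_{z\in X_n} g(z)$ with $\hat g(k)=\mathds{1}_{k\neq 0}\,\hat f(k/n)/n$, then apply the previous proposition and identify the resulting sum as the Riemann sum. The only cosmetic difference is that you invoke Poisson summation by name, whereas the paper computes the Fourier coefficients of the periodization $g$ directly; these are the same calculation.
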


\begin{proof}
We have 
$$\sum_{x \in E_n} f(x) - \int_{-\infty}^{\infty} f(t) dt
= \sum_{z \in X_n} g(z),$$
where $X$ follows the $C \beta E$ of dimension $n$ and 
$$g(e^{2 i \pi \theta}) = \sum_{m \in \mathbb{Z}} f( n (\theta + m)) - \frac{1}{n} \int_{-\infty}^{\infty} f(t) dt.$$
Since $f$ is in the Schwartz space, $g$ is smooth by dominated convergence. Moreover, 
since
$$g(z) = \sum_{k \in \mathbb{Z}} \hat{g}(k) z^k,$$
we have 
\begin{align*}
\hat{g}(k) & = \int_0^1 g(e^{2 i \pi \theta}) e^{-2 i \pi k \theta} d \theta
\\  & =  \int_0^1  d \theta \, e^{-2 i \pi k \theta} \sum_{m \in \mathbb{Z}} f( n (\theta + m))
- \int_0^1 d \theta \, e^{-2 i \pi k \theta} \frac{1}{n} \int_{-\infty}^{\infty} f(t) dt.
\\ & = \int_{-\infty}^{\infty} e^{-2 i \pi k \theta} f( n \theta) d \theta
- \frac{\mathds{1}_{k =0} }{n} \int_{-\infty}^{\infty} f(t) dt
 \\ & = \frac{1}{n} (\hat{f}(k/n) - \mathds{1}_{k =0} \hat{f}(0)) 
  = \frac{\mathds{1}_{k \neq 0}}{n} \hat{f}(k/n).
\end{align*} 
In particular, the Fourier transform of $g$ is equal to zero at zero and outside $[-n/2,n/2]$, and then we can apply the previous proposition. 
We get 
\begin{align*}
\mathbb{E} \left[ \left|\sum_{x \in E_n} f(x) - \int_{\infty}^{\infty} f(t) dt \right|^2 \right] 
& \ll_{\beta} \sum_{k \in \mathbb{Z}} |k| |\hat{g}(k)|^2
 = \sum_{k \in \mathbb{Z}} |k| |\hat{f}(k/n)/n|^2
\\ & = \frac{1}{n} \sum_{k \in \mathbb{Z}} |k/n||\hat{f}(k/n)|^2,
\end{align*}
which gives the desired Riemann sum. 
\end{proof}
Passing to the limit when $n$ goes to infinity, we deduce a bound on the variance of the linear statistics of the $\operatorname{Sine_{\beta}}$ process in terms of the $H^{1/2}$ norm of the test function $f$, as soon as the Fourier transform of $f$ is supported in $[-1/2,1/2]$.  
\begin{proposition}
Let $f$ be a function from $\mathbb{R}$ to $\mathbb{R}$ in the Schwartz space, whose Fourier transform is supported in 
$[-1/2,1/2]$. 
Let $E$ be the set of points of a  $\operatorname{Sine}_{\beta}$ process. We have 
$$\mathbb{E} \left[\left| \sum_{x \in E} f(x) - \int_{-\infty}^{\infty} f(t) dt \right|^2 \right] 
\ll_{\beta} \int_{\mathbb{R}} |x| |\hat{f} (x)|^2 dx.$$
\end{proposition}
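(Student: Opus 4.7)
The plan is to let $n \to \infty$ in the variance bound of the preceding proposition and recognize the left-hand side as the desired variance for the $\operatorname{Sine}_{\beta}$ process. Set $Y_n := \sum_{x \in E_n} f(x) - \int f$ and $Y := \sum_{x \in E} f(x) - \int f$, where $E$ is the limiting $\operatorname{Sine}_{\beta}$ process. The strategy has three parts: (i) the right-hand side $\frac{1}{n}\sum_{k \in \mathbb{Z}} |k/n| |\hat f(k/n)|^2$ is a Riemann sum for the continuous compactly supported function $x \mapsto |x| |\hat f(x)|^2$ (compactness of the support coming from $\operatorname{supp}(\hat f) \subset [-1/2, 1/2]$), so it tends to $\int_\mathbb{R} |x| |\hat f(x)|^2 \, dx$; (ii) $Y_n$ converges in distribution to $Y$; (iii) Fatou's lemma for weakly convergent sequences gives $\mathbb{E}[|Y|^2] \leq \liminf_n \mathbb{E}[|Y_n|^2]$, which combined with (i) and the previous proposition yields the claim.

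The main obstacle is (ii). The Killip--Stoiciu theorem only asserts $E_n \Rightarrow E$ as point processes when restricted to any bounded interval, whereas $f$, being band-limited, cannot be compactly supported. I would overcome this by a standard cutoff decomposition. Fix a smooth $\chi_R$ with $\chi_R \equiv 1$ on $[-R,R]$ and $\operatorname{supp}(\chi_R) \subset [-R-1, R+1]$, write $f = f_R + g_R$ with $f_R := \chi_R f$, and define $Y_n^{(R)}, Z_n^{(R)}$ from $f_R, g_R$ so that $Y_n = Y_n^{(R)} + Z_n^{(R)}$ (similarly $Y = Y^{(R)} + Z^{(R)}$). For fixed $R$ and $n > 2(R+1)$, the $n$-periodicity of $E_n$ ensures that at most one $n$-translate of each fundamental eigenangle lies in $\operatorname{supp}(f_R)$, so $\sum_{x \in E_n} f_R(x)$ is a continuous bounded functional of $E_n \cap [-R-1, R+1]$; Killip--Stoiciu then gives $Y_n^{(R)} \Rightarrow Y^{(R)}$ as $n \to \infty$.

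It remains to control the tails $Z_n^{(R)}$ and $Z^{(R)}$ uniformly as $R \to \infty$. Here I would use that both $E_n$ and $E$ have one-point intensity identically equal to $1$: for $E_n$, this is immediate from the rotational invariance of the $C \beta E$ (each eigenangle $\theta_j$ is individually uniform on $[0, 2\pi)$, so the rescaled set has density $1$ on $\mathbb{R}$); for $E$, this is standard and can be recovered by passing to the limit. Consequently $\mathbb{E}\bigl[\sum_{x \in E_n} |g_R(x)|\bigr] = \int_\mathbb{R} |g_R|$ and similarly for $E$, and both quantities tend to $0$ as $R \to \infty$ since $f$ is Schwartz. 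Markov's inequality then shows $Z_n^{(R)} \to 0$ and $Z^{(R)} \to 0$ in probability as $R \to \infty$, uniformly in $n$. Combining this with $Y_n^{(R)} \Rightarrow Y^{(R)}$ via the standard three-step approximation argument yields $Y_n \Rightarrow Y$, and Fatou closes the proof. The delicate point I anticipate is the uniform control of the tails: one has to rely only on first-moment information, since correlation function formulas are unavailable for general $\beta$, but the trivial intensity-one bound is robust enough to suffice.
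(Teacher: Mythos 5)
Your proposal is correct and follows essentially the same route as the paper: the authors also decompose $f$ into a compactly supported smooth truncation plus a small tail, control the tail in $L^1$ using the fact that the one-point intensity of both $E_n$ and $E$ equals $1$, deduce $Y_n \Rightarrow Y$, and then obtain $\mathbb{E}[|Y|^2] \le \liminf_n \mathbb{E}[|Y_n|^2]$ by truncating $|\cdot|^2$ at a level $A$ and letting $A \to \infty$ (their implementation of your Fatou step), before concluding with the Riemann-sum convergence. Your explicit remarks on the $n$-periodicity of $E_n$ and on deriving the unit intensity from rotational invariance only make more precise steps the paper treats as standard.
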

\begin{proof}
Let $f = f_1 + f_2$, where $f_1$ is smooth with compact support. 
By the convergence in law of the renormalized $C \beta E$ towards the $\operatorname{Sine}_{\beta}$, we have the convergence in law: 
$$\sum_{x \in E_n} f_1(x) 
\underset{n \rightarrow \infty}{\longrightarrow} \sum_{x \in E} f_1(x),$$
i.e. for $\lambda \in \mathbb{R}$, 
$$\mathbb{E} \left[ \exp \left( i \lambda \sum_{x \in E_n} f_1(x) \right) \right]
\underset{n \rightarrow \infty}{\longrightarrow}
\mathbb{E} \left[ \exp \left( i \lambda \sum_{x \in E} f_1(x) \right) \right].$$
Using the Lipschitz property of the complex exponential and the triangle inequality, we deduce 
\begin{align*}
& \underset{n \rightarrow \infty}{\lim\sup} 
\left|\mathbb{E} \left[ \exp \left( i \lambda \sum_{x \in E_n} f(x) \right) \right]
- 
\mathbb{E} \left[ \exp \left( i \lambda \sum_{x \in E} f(x) \right) \right] \right|
\\ & \leq |\lambda| \, \underset{n \rightarrow \infty}{\lim\sup} \,
\mathbb{E} \left[  \sum_{x \in E_n} |f_2(x) | + \sum_{x \in E} |f_2(x) | \right]
\\ & = 2 |\lambda| \, \int_{-\infty}^{\infty} |f_2(t)| dt,
\end{align*}
the last equality coming from the fact that the one-point correlation function of the point processes $E_n$ and $E$ is equal to one. 
Since the integral of $|f_2|$ can be arbitrarily small (take for $f_1$ a smooth truncation of $f$, equal to $f$ in a sufficiently large bounded interval), the upper limit is zero and we have the convergence in law:
$$\sum_{x \in E_n} f(x) 
\underset{n \rightarrow \infty}{\longrightarrow} \sum_{x \in E} f(x),$$
which imples the convergence in law
$$\left|\sum_{x \in E_n} f(x) - \int_{-\infty}^{\infty} f(t) dt \right|^2
\underset{n \rightarrow \infty}{\longrightarrow} \left|\sum_{x \in E} f(x) - \int_{-\infty}^{\infty} f(t) dt \right|^2.$$
If $A > 0$, we deduce 
$$\mathbb{E} \left[ \left| \sum_{x \in E_n} f(x) - \int_{-\infty}^{\infty} f(t) dt \right|^2 \wedge A \right]
\underset{n \rightarrow \infty}{\longrightarrow} \mathbb{E} 
\left[ \left| \sum_{x \in E} f(x) - \int_{-\infty}^{\infty} f(t) dt \right|^2 \wedge A \right],$$ 
$$ \underset{n \rightarrow \infty}{\lim\inf} \, \mathbb{E} \left[ \left| \sum_{x \in E_n} f(x) - \int_{-\infty}^{\infty} f(t) dt \right|^2 \right]
 \geq \mathbb{E} 
\left[ \left| \sum_{x \in E} f(x) - \int_{-\infty}^{\infty} f(t) dt \right|^2 \wedge A \right],$$
and by letting $A \rightarrow \infty$, 
$$\mathbb{E} \left[ \left| \sum_{x \in E} f(x) - \int_{-\infty}^{\infty} f(t) dt \right|^2  \right]
\leq \underset{n \rightarrow \infty}{\lim\inf} \, \mathbb{E} \left[ \left| \sum_{x \in E_n} f(x) - \int_{-\infty}^{\infty} f(t) dt \right|^2 \right].$$
By the previous proposition, it is enough to show that 
$$\int_{\mathbb{R}} |[x]_n| |\hat{f} ([x]_n)|^2 dx \underset{n \rightarrow \infty}{\longrightarrow}
 \int_{\mathbb{R}} |x| |\hat{f} (x)|^2 dx.$$
 Since 
 $\hat{f}$ is smooth ($f$ is in the Schwartz space) with support included in $[-1/2,1/2]$, this last convergence is 
 just the convergence of the Riemann sums of an integral. 
\end{proof}
The next proposition shows that we can apply the result just above to functions which enjoy suitable properties 
for the proof of the rigidity of the point process $E$: 
\begin{proposition}
Let $R > 0$. There exists $f$ in the Schwartz space, with Fourier transform supported on $[-1/2,1/2]$, such that  
$$||f||_{H^{1/2}} := \left(\int_{\mathbb{R}} |x| |\hat{f} (x)|^2 dx \right)^{1/2}$$
and 
$$\underset{t \in [-R,R]}{\sup} |f(t) -1|$$
are arbitrarily small. 
\end{proposition}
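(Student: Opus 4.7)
The plan is to construct $f$ directly through its Fourier transform. First I would reduce everything to a single quantity: if $\int \hat f = 1$ and $\hat f$ is supported in $[-1/2,1/2]$, then writing $f(t)-1 = \int \hat f(\xi)(e^{2\pi i\xi t}-1)\,d\xi$ and using $|e^{2\pi i\xi t}-1|\leq 2\pi|\xi||t|$ yields $|f(t)-1|\leq 2\pi|t|\int |\xi|\,|\hat f(\xi)|\,d\xi$. By Cauchy--Schwarz, since $\int_{-1/2}^{1/2}|\xi|\,d\xi = 1/4$,
$$\int |\xi|\,|\hat f(\xi)|\,d\xi \;\leq\; \Bigl(\tfrac{1}{4}\Bigr)^{1/2}\|f\|_{H^{1/2}} \;=\; \tfrac{1}{2}\|f\|_{H^{1/2}},$$
and therefore $\sup_{|t|\leq R}|f(t)-1|\leq \pi R\,\|f\|_{H^{1/2}}$. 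The proposition thus reduces to exhibiting a Schwartz function $f$ with $\hat f$ supported in $[-1/2,1/2]$, $\int \hat f = 1$, and $\|f\|_{H^{1/2}}$ arbitrarily small.

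The subtlety is that the homogeneous seminorm $\|\cdot\|_{H^{1/2}}$ is scale-invariant in dimension one: the dilation $\hat f(\xi)\mapsto \lambda\hat f(\lambda\xi)$ preserves simultaneously $\int \hat f$ and $\|f\|_{H^{1/2}}$, so no single bump concentrated near zero can decrease the seminorm. A multiscale averaging is required. I would fix once and for all a smooth even nonnegative bump $\varphi$ supported in the dyadic shell $\{1/2\leq |\xi|\leq 1\}$ with $\int\varphi = 1$, and for each integer $N\geq 1$ define
$$\hat f_N(\xi) \;=\; \frac{1}{N}\sum_{j=1}^N 2^j\,\varphi(2^j\xi).$$
The summands have pairwise disjoint supports inside $[-1/2,1/2]$, each integrating to $1/N$, so $\hat f_N$ is smooth and compactly supported, $f_N$ is real-valued (by evenness of $\varphi$) and Schwartz, and $\int \hat f_N = 1$.

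The core computation is $\|f_N\|_{H^{1/2}}^2 = \int |\xi|\,|\hat f_N(\xi)|^2\,d\xi$. By the disjointness of the supports only diagonal terms survive, and the substitution $\eta = 2^j\xi$ shows that each of the $N$ resulting contributions equals $N^{-2}C_\varphi$ with $C_\varphi = \int|\eta|\varphi(\eta)^2\,d\eta$, so
$$\|f_N\|_{H^{1/2}}^2 \;=\; \frac{C_\varphi}{N}.$$
Combined with the reduction above, both quantities in the statement are of order $N^{-1/2}$, and letting $N\to\infty$ completes the proof.

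The main obstacle is conceptual rather than technical: one has to recognize that $H^{1/2}$ is the critical Sobolev space in dimension one, so that the only way to make the seminorm small is to spread the mass of $\hat f$ over many disjoint scales rather than concentrating it. Once this is seen, the multiscale construction and the associated bookkeeping (disjoint dyadic supports, substitutions $\eta = 2^j\xi$, telescoping of the $N$ terms) are entirely elementary.
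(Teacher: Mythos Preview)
Your argument is correct. Both proofs rest on the same insight you articulate: the homogeneous $H^{1/2}$ seminorm is dilation-invariant in dimension one, so one must spread $\hat f$ over many scales rather than concentrate it. The executions differ. The paper starts from any bump with $f(0)=1$ and iterates the averaging $f\mapsto\tfrac12(f+f(\cdot/L))$, showing that the cross term in $\|f_L\|_{H^{1/2}}^2$ vanishes as $L\to\infty$ so that each step cuts the squared seminorm roughly in half; a final dilation $f\mapsto f(\cdot/L)$ (which preserves $\|\cdot\|_{H^{1/2}}$ and $f(0)=1$) then forces $\sup_{[-R,R]}|f-1|$ small. You instead build $\hat f_N$ explicitly as a sum of $N$ dyadic dilates with \emph{disjoint} supports, which makes the computation $\|f_N\|_{H^{1/2}}^2=C_\varphi/N$ immediate, and you replace the paper's final dilation by the clean a priori bound $\sup_{[-R,R]}|f-1|\le \pi R\,\|f\|_{H^{1/2}}$ via Cauchy--Schwarz. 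Your route is more explicit and quantitative; the paper's is a bit more flexible (no need for disjoint dyadic shells, any initial bump works) but less sharp. Either way the content is the same multiscale idea.
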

\begin{proof}
Let $f$ be the inverse Fourier transform of a nonnegative smooth function supported on $[-1/2,1/2]$, normalized in such a way that $f(0) = 1$. Then, $f$ is in the Schwartz space, with Fourier transform supported on $[-1/2,1/2]$. For $L > 1$, we define 
$$f_L(t) :=\frac{1}{2}( f(t) + f(t/L)).$$
This function is still in the  Schwartz space, with Fourier transform supported on $[-1/2,1/2]$, and $f_L(0) = 1$. 
We have 
$$4 ||f_L||_{H^{1/2}}^2 =  \int_{\mathbb{R}}|x| |\hat{f} (x)|^2 dx
+ \int_{\mathbb{R}} |x| |L \hat{f} (Lx)|^2 dx + 2 \int_{\mathbb{R}} |x| \Re \left(  \hat{f} (x)
\overline{L  \hat{f} (Lx) } \right) dx.$$
The first integral is $|f||_{H^{1/2}}^2$, the second has the same value by a change of variable. 
The last integral is supported in $[-1/2,1/2]$, the integrand being uniformly bounded
by the supremum of $|\hat{f}|$ times the supremum of $|y| |\hat{f}(y)|$ (take $y = Lx$). For fixed $x$, it is equal to 
zero when $L$ is large enough depending on $x$. By dominated convergence,  
$$\int_{\mathbb{R}} |x| \Re \left(  \hat{f} (x)
\overline{L  \hat{f} (Lx) } \right) dx
\underset{L \rightarrow \infty}{\longrightarrow} 0.$$
We deduce that for $L$ large enough (depending on $f$), 
$$||f_L||_{H^{1/2}}^2 \leq 0.51 ||f||_{H^{1/2}}^2.$$
Iterating this process gives an arbitrarily  small value of  $||f||_{H^{1/2}}^2$. 
Since $f$ is smooth and equal to $1$ at zero, we can do a final replacement of $f$ by $f_L$ for $L$ large (the support of the Fourier transform is still included in $[-1/2,1/2]$ and the $H^{1/2}$ norm is not changed) in order to get 
a uniformly small value of  $|f - 1|$ on $[-R,R]$. 
\end{proof}
We have now all the ingredients needed to finish the proof of the main theorem. 

{\it Proof of the main theorem:} Let $E$ be the set of points in a $\operatorname{Sine}_{\beta}$ process. Let $B$ be a  bounded Borel set of $\mathbb{R}$, included in an interval $[-R,R]$, $R > 0$. 
Let $(f_p)_{p \geq 1}$ be a sequence of  functions in the Schwartz space,  with Fourier transform supported on $[-1/2,1/2]$,  such that 
$$||f_p||_{H^{1/2}} \leq 2^{-p}, \underset{t \in [-R,R]}{\sup} |f_p(t) -1| \leq 2^{-p}.$$
If $\mathcal{N}_C$ denotes the number of points of  $E$ in the subset $C$ of $\mathbb{R}$, we  have 
$$ \mathcal{N}_{[-R,R]}=   \left(\sum_{x \in E} f_p(x) - \int_{-\infty}^{\infty} f_p(t) dt \right)
- \sum_{x \in E, |x| \leq   R} [f_p(x) - 1] + 
\int_{-\infty}^{\infty} f_p(t) dt - \sum_{x \in E, |x|>  R} f_p(x)
.$$
The $L^2$ norm of the term into parentheses is dominated by $||f_p||_{H^{1/2}} \leq 2^{-p}$.
The $L^1$ norm of the sum involving $f_p- 1$ is (because of the one-point correlation function),
at most $2R \cdot 2^{-p}$. 
 Hence, these two terms almost surely tend to zero when $p$ goes to infinity. We deduce that 
almost surely, 
$$\mathcal{N}_{[-R,R]}  = \underset{p \rightarrow \infty}{\lim}\left( \int_{-\infty}^{\infty} f_p(t) dt - \sum_{x \in E, |x|>  R} f_p(x)\right),$$
and then 
$$\mathcal{N}_{B} = \underset{p \rightarrow \infty}{\lim}\left( \int_{-\infty}^{\infty} f_p(t) dt - \sum_{x \in E, |x|>  R} f_p(x)\right) - \mathcal{N}_{[-R,R] \backslash B}.$$
The right-hand side of the last expression is clearly in the $\sigma$-algebra $\Sigma_{E \backslash B}$. 

\bibliographystyle{halpha}
\bibliography{rigiditybib}
\end{document}